\newtheorem{thm}{Theorem}[section]
\newtheorem{cor}[thm]{Corollary}
\newtheorem{lem}[thm]{Lemma}
\newtheorem{prop}[thm]{Proposition}
\theoremstyle{definition}
\newtheorem{defin}[thm]{Definition}
\newtheorem{rem}[thm]{Remark}
\numberwithin{equation}{section}
\def\P{\mathbb{P}}
\def\E{\mathbb{E}}
\def\Var{\mathrm{Var}\,}
\def\eps{\varepsilon}
\begin{document}


\baselineskip=17pt


\title{Random groups are not left-orderable}

\author{Damian Orlef\\
Institute of Mathematics\\
Polish Academy of Sciences\\
00-656 Warszawa, Poland\\
E-mail: dorlef@impan.pl}

\date{}

\maketitle


\renewcommand{\thefootnote}{}

\footnote{2010 \emph{Mathematics Subject Classification}: Primary 20F65; Secondary 20F60.}

\footnote{\emph{Key words and phrases}: Gromov random groups,	orderable groups.}

\renewcommand{\thefootnote}{\arabic{footnote}}
\setcounter{footnote}{0}


\begin{abstract}
We prove that random groups in the Gromov density model
at~any density $d$ have with overwhelming probability no non-trivial left-orderable quotients.
In particular, random
groups at~densities $d<\frac{1}{2}$ are not left-orderable.
\end{abstract}

\section{Introduction}

We work in the density model for random groups introduced
by Gromov.

\begin{defin}[cf. {\cite[Section 9.B]{gromov93}},
{\cite[Definition 7]{ollivier05}}]\label{random_group_def}
Let $F_n$ be the free group on~$n \geq 2$ generators
$a_1,\ldots,a_n$. For
any integer $L$ let $R_L \subset F_n$ be~the~set of~reduced
words of length
$L$ in these generators.

Let $d\in (0,1)$. A \emph{random set of relators at density
$d$, at length $L$} is a~sequence of~$\lfloor
(2n-1)^{dL}\rfloor$
elements of $R_L$, picked independently and uniformly
at~random from all elements of $R_L$.

A \emph{random group at density $d$, at length $L$} is the
group $G$ presented by~$\langle S | R \rangle$,
where $S = \{a_1,\ldots,a_n\}$ and $R$ is a random set of
relators at~density $d$, at length $L$.
\end{defin}

The relators in $R_L$ are not assumed to be cyclically
reduced.

Of particular interest in the study of random groups are
the properties occurring \emph{with overwhelming
probability}.

\begin{defin}[cf. {\cite[Section 9.B]{gromov93}},
{\cite[Definition 7]{ollivier05}}]
Let $I\subset \mathbb{N}_+$ be~infinite. We~say that a
property of random sets of relators, or of random groups,
occurs \emph{with $I$-overwhelming
probability} (shortly,  \emph{w.\ $I$-o.p.}) \emph{at
density $d$} if its probability of~occurrence tends to $1$
as $L \rightarrow \infty$, for $L\in I$ and fixed $d$. We~omit writing ``$I$-" if~$I=\mathbb{N}_+$.
\end{defin}

Basic characteristics of the model are given by
the~following phase transition theorem, due to Gromov.

\begin{thm}[cf. {\cite[Section 9.B]{gromov93}},
{\cite[Theorem 2]{ollivier04}}]\label{phase_transition_thm}
A random group is~with overwhelming probability
\begin{itemize}
\item trivial or $\mathbb{Z}/2\mathbb{Z}$ at density
$d>\frac{1}{2}$,
\item infinite, hyperbolic and torsion-free at density
$d<\frac{1}{2}$.
\end{itemize}
\end{thm}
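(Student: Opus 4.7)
The plan is to handle the two parts by fundamentally different arguments. At density $d > 1/2$ the abundance of relators forces collapse via a prefix-matching pigeonhole, while at density $d < 1/2$ a first-moment estimate produces a linear isoperimetric inequality from which the geometric conclusions follow.

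For the supercritical case $d > 1/2$, I would note that the number of relators, $(2n-1)^{dL}$, is asymptotically much larger than the number of reduced words of length $L/2$. The expected number of relators ending in a given suffix $w$ of length $L/2$ is $(2n-1)^{(d-1/2)L} \to \infty$, so a second-moment concentration shows that with overwhelming probability every reduced word of length $L/2$ appears as a suffix of many relators. For any two such words $u_1, u_2$, one can then find relators of the form $u_1 w$ and $u_2 w$, yielding $u_1 = u_2$ in $G$. This collapses the image of all length-$L/2$ words to a single element $\sigma$; examining words of adjacent lengths (and minding parity of $L$) forces $\sigma^2 = 1$, so $G$ is trivial or $\mathbb{Z}/2\mathbb{Z}$.

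For the subcritical case $d < 1/2$, the central tool is the isoperimetric inequality: for every $\eta > 0$, w.o.p.\ every reduced van Kampen diagram $D$ over the random presentation satisfies $|\partial D| \geq (1 - 2d - \eta)|D|L$. The proof is a first-moment bound on the number of realizations of a fixed abstract combinatorial type of $D$ with $k$ faces. A realization assigns relators to the $k$ faces, and matching the labels along shared boundary pieces of total length $(kL - |\partial D|)/2$ imposes a probability cost $(2n-1)^{-(kL - |\partial D|)/2}$. Combined with $(2n-1)^{dLk}$ choices of relators and a subexponential count of combinatorial types, the expected number of ``bad'' diagrams is bounded by $(2n-1)^{(2d-1)kL/2 + |\partial D|/2 + o(L)}$, which is $o(1)$ exactly when $|\partial D| > (1-2d)kL$. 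This linear Dehn function yields Gromov hyperbolicity; nontriviality and infiniteness follow by applying the inequality to single-face diagrams and to diagrams of growing size, respectively; torsion-freeness follows from the analogous estimate applied to annular diagrams that would realize $g^n = 1$.

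The principal obstacle is the combinatorial enumeration in the subcritical case: one must count reduced van Kampen diagrams sharply enough that their subexponential number does not swamp the exponential gain from the density condition, and one must correctly handle degenerate configurations such as faces sharing multiple pieces or non-embedded boundaries. Formulating the right notion of \emph{reduced} diagram, and proving that reduction does not increase the probability cost, is what makes the argument technical; the threshold $d = 1/2$ emerges naturally, since the expected saving per face, $(1 - 2d)L$, changes sign precisely there.
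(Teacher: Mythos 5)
The paper does not prove Theorem~\ref{phase_transition_thm}; it is recalled as a known result with citations to Gromov and to Ollivier~\cite{ollivier04}, where the full argument appears. So there is no in-paper proof to compare against, and your proposal should be judged against the known arguments.

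Your subcritical outline is essentially Ollivier's proof and is sound: the first-moment bound over abstract diagram types, the resulting isoperimetric inequality $|\partial D| \geq (1-2d-\eta)L|D|$ for reduced van Kampen diagrams, and the derivation of hyperbolicity, infiniteness and torsion-freeness from it are exactly the standard route, with the technical load sitting (as you say) in the enumeration of diagram types and in the treatment of reduced and degenerate diagrams.

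The supercritical outline has a genuine gap. Knowing that every reduced word $w$ of length $L/2$ is a suffix of many relators does \emph{not} let you pick, for two \emph{arbitrary} prefixes $u_1,u_2$, a common suffix $w$ with both $u_1w$ and $u_2w$ among the relators: the prefixes attached to a given suffix are the random ones occurring in $R$, not ones of your choosing. What your claim actually requires is that the ``relator graph'' $\Gamma$ on the $N\approx(2n-1)^{L/2}$ length-$L/2$ words, with an edge $\{u,v^{-1}\}$ for each relator $r=uv$, has diameter $2$; that would need on the order of $N^{3/2}$ edges, i.e.\ $d>3/4$, so the step fails for $\tfrac12<d<\tfrac34$. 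The correct and sufficient statement is that $\Gamma$ is \emph{connected}: each edge forces $\overline{u}=\overline{v^{-1}}$, and connectivity already collapses all length-$L/2$ words to one element $\sigma$, after which your endgame (adjacent lengths, parity, $\sigma^2=1$) goes through. Connectivity does hold w.o.p.\ for every $d>\tfrac12$, since the number of (nearly uniform) edges is $(2n-1)^{dL}=N^{2d}\gg N\log N$. Relatedly, the phrase ``second-moment concentration'' undersells what is needed even for your weaker coverage claim: a Chebyshev bound per suffix gives failure probability $\approx (2n-1)^{-(d-1/2)L}$, which cannot survive a union bound over $(2n-1)^{L/2}$ suffixes unless $d>1$; one must invoke the exponential (Chernoff/binomial) tail coming from independence, or, better, replace the pointwise coverage claim by the connectivity argument, which is what the cited proofs actually do.
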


A number of interesting properties are known to hold for
random groups w.o.p. at~various densities (see
\cite[Section I.3]{ollivier05}).

In this paper we consider the \emph{left-orderability}.

\begin{defin}\label{left_order_def}A group $G$ is said to be
\emph{left-ordered} by $\leq$ if $\leq$ is a total order
on $G$ which is~\emph{left-invariant}: for all $g_1,g_2,h
\in G$ the condition $g_1\leq g_2$ implies
$hg_1 \leq hg_2$.\end{defin}

Our main result is the following.

\begin{thm}\label{target_thm} Let $d\in(0,1)$. A random
group in the Gromov density model at~density $d$ has w.o.p.\
no non-trivial left-orderable quotients.\end{thm}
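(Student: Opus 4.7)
The case $d > \tfrac{1}{2}$ is immediate from Theorem~\ref{phase_transition_thm}: w.o.p.\ the random group is either trivial (in which case the conclusion is stated in the theorem) or isomorphic to $\ZZ/2\ZZ$, which is non-left-orderable as it has an element of order $2$. The whole content of the theorem therefore lies in the case $d < \tfrac{1}{2}$, where Theorem~\ref{phase_transition_thm} only gives that $G$ is w.o.p.\ infinite, hyperbolic and torsion-free; the obstruction to left-orderability must thus be a genuinely torsion-free one.

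My plan is to work through the standard positive-cone reformulation of left-orderability: a group $G$ is left-orderable if and only if for every finite $F \subset G \setminus \{1\}$ there exists a sign map $\epsilon\colon F \to \{+,-\}$ such that the multiplicative subsemigroup of $G$ generated by $\{f^{\epsilon(f)} : f \in F\}$ does not contain $1_G$. Non-left-orderability then reduces to producing, w.o.p., a finite subset $F \subset G$ which defeats every sign assignment. A two-element $F$ forces $2$-torsion and so cannot work in a torsion-free setting, so I would aim at $|F|=3$: fix three short words $w_1, w_2, w_3$ in the generators (for instance $a_1$, $a_2$, $a_1 a_2$), and, for each of the four sign patterns modulo global inversion, produce a word in the letters $\{w_i^{\epsilon_i}\}$ that evaluates to $1$ in $G$.

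The verification would be carried out in two steps. First, one checks w.o.p.\ that the chosen $w_i$ are nontrivial and satisfy enough independence in $G$, which is routine at $d<\tfrac{1}{2}$ from the small-cancellation/Dehn-algorithm picture that accompanies Theorem~\ref{phase_transition_thm}. Second, and this is the probabilistic heart of the argument, one shows that among the $\lfloor(2n-1)^{dL}\rfloor$ random relators there are, w.o.p., four (or more) relators of prescribed shapes which simultaneously force the required four identities among the $w_i^{\epsilon_i}$.

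The main obstacle will be the low-density regime $d \ll 1$, where the number of relators is exponentially smaller than the total number of reduced words of length $L$; no specific relator can appear with non-negligible probability, so the identities must instead be deduced from aggregate/covering properties of the random set of relators, in the style of Ollivier: for example, that every sufficiently short pattern appears as a sub-word of some relator w.o.p., so that partial pieces of the desired identities can be assembled from pieces of different relators. Threading such covering statements through all four sign patterns simultaneously, while maintaining the nontriviality of the $w_i$ in $G$, is where I expect the main technical difficulty to lie.
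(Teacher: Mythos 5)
Your positive-cone framing is exactly the one the paper uses (Lemma~\ref{ord_lem} is the forward implication of that equivalence), and you correctly identify that the work is in the low-density regime. But the choice of a fixed three-element test set $F=\{a_1,a_2,a_1a_2\}$ is where the plan breaks. The set of reduced words of length $L$ that are products of $a_1^{\eps_1},a_2^{\eps_2},(a_1a_2)^{\eps_3}$ is a subset of the positive words in $\{a_1^{\eps_1},a_2^{\eps_2}\}$, of size $\le 2^L$, hence of density at most $\log_{2n-1} 2$ — which is below $1-d$ for every $d<1-\log_{2n-1}2$ (already $d<0.37$ at $n=2$, and worse for larger $n$). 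So no density-intersection argument will produce a relator that \emph{is} such a product, and the fallback you sketch — gluing sub-words of several relators into an identity in $G$ — is not developed and has no obvious mechanism: having a pattern appear as a sub-word of a relator does not yield a relation among the $w_i$.

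The paper's key move, which resolves exactly this issue, is to make $|F|$ grow with $n$ rather than staying fixed: take $F=\{\overline{a_1},\dots,\overline{a_n}\}$ (the generators themselves, after discarding those that die in $G$). Then for any sign pattern $\eps$, the reduced words using only $a_1^{\eps_1},\dots,a_n^{\eps_n}$ number $\sim n^L$ and have density $\log_{2n-1} n\to 1$ as $n\to\infty$, so for fixed $d$ and $n$ large enough this exceeds $1-d$ and a random relator lands in this set w.o.p.\ — giving a positive word that is trivial in $G$, contradiction (this is Proposition~\ref{simple_non_ordered_prop} via Lemma~\ref{positive_relator_lem} and the b-automaton machinery). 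To cover small $n\ge 2$, the paper then imports the block/associated-group trick of Dahmani–Guirardel–Przytycki (Section~4): relators of length $L$ over $n$ generators are reread as relators of length $L/B$ over $\hat n = n(2n-1)^{B-1}$ meta-generators, which effectively enlarges $n$ at will. Both of these steps — scaling the test set with $n$, and the block trick for small $n$ — are absent from your proposal, and without them the argument does not close.
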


In conjunction with Theorem \ref{phase_transition_thm},
this shows non-left-orderability of random groups below the
critical density $d=\frac{1}{2}$.

If $G$ is a~countable group (e.g. a~quotient of~a~random group), then $G$~is left-orderable if~and~only if~it admits a~faithful
action on the~real line by~orientation-preserving
homeomorphisms (see~\cite[Section 1.1.3]{deroin14}).
Theorem~\ref{target_thm} may be~thus treated as~a~result
connected to~the~Gromov conjecture
that a~random group
should not have any smooth actions on any compact manifolds
(cf. \cite[Conjecture 4.22]{furman11}).

As a~side note, Theorem~$\ref{target_thm}$
provides also an~alternative way of showing that random
groups are not free of rank~${\geq 1}$ at any density,
since free groups are left-orderable (cf. \cite[Theorem
2.3.1]{rhemtulla}). The more usual proof of~this
proceeds by~establishing that random groups have trivial
abelianizations.

The main idea of our proof is to use the~order on~the~given non-trivial quotient~$Q$ of the~random~group $G=\langle S|R\rangle$ as follows. We explicitly construct a~high-density
set $P$ of words in $F_n$, representing strictly positive
(in~the~sense of~the~order) elements of $Q$. It happens that for fixed $d$, the~density of $P$ exceeds $(1-d)$ for~$n$
sufficiently large. By~a~well-known fact it~thus contains
w.o.p.\ a word $w$ from the set $R$ of~relators, leading to
a~contradiction of~the~element corresponding to $w$
in $Q$ being both positive and~trivial.
Finally, we use the approach of \cite{przytycki11} to
increase the~number
of generators we~work with and~obtain the result for all
$n\geq 2$.

The whole proof is phrased in the language of the
b-automata and the~associated groups, as introduced in
\cite{przytycki11} and follows a very similar framework.
Just in the case of fixed $d$ and sufficiently large $n$
one can entirely avoid referring to~\cite{przytycki11}
and~provide a~bit shorter argument. It consists
of~considering sets $\mathcal{L}_{\varepsilon, i}$
from the~proof of~Lemma~\ref{positive_relator_lem},
proving they all intersect w.o.p\ a~random set of~relators
by~the~usual density argument and then proceeding
as in the proof of
Proposition~\ref{simple_non_ordered_prop}.

This paper is~structured as follows. Section 2 deals with
the basic properties of~the~left-ordered groups. In
Section~3 we introduce the notion of~a~b-automaton and its
language and use them
to give a proof of~Theorem~\ref{target_thm} for~$n$
sufficiently large. In Section 4 we use
the concept of~the~associated groups to generalise it to
all \mbox{$n\geq 2$}. In Appendix A we reprove a~well-known
generalisation of the fact that a~random set of~elements at
density $d$ intersects w.o.p.\ any fixed set of~elements of
density $d'$ such that $d+d' > 1$. The~more general
statement is that their intersection is roughly of~density
\mbox{$d+d'-1$}
if $d<\frac{1}{2}$ (cf. \cite[Section 9.A]{gromov93}).
The~assumption on $d$ is not really limiting, in~view
of~Theorem~\ref{phase_transition_thm}. It comes from the
fact that we define ``a~random set at~density~$d$"
to~be~a~tuple with possible repetitions. If we, however,
have $d<\frac{1}{2}$, then there are w.o.p.\  no such
repetitions and~the~counting is easier.

\section{Left orders}

Let $G$ be a group left-ordered by $\leq$. Symbols $<$ and
$>$ are the usual shorthands. By $e$ we~denote the neutral
element of $G$. The following remarks are easily obtained
from Definition \ref{left_order_def}.

\begin{rem}\label{ord_rmk1}Any non-empty product of
elements strictly greater than $e$ is itself strictly
greater than $e$.\end{rem}

\begin{rem}\label{ord_rmk2}For every $g\in G\setminus\{e\}$
one can choose a sign $\varepsilon \in \{-1, 1\}$ such that
$g^\varepsilon>e$.\end{rem}

Those two imply quickly the following.
\begin{cor}\label{ord_cor} $G$ is torsion-free.
\end{cor}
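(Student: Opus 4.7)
The plan is a short proof by contradiction combining the two preceding remarks. Suppose, for contradiction, that $G$ contains a nontrivial torsion element $g \in G \setminus \{e\}$, say of finite order $k \geq 2$, so that $g^k = e$.

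By Remark \ref{ord_rmk2}, there exists a sign $\varepsilon \in \{-1,1\}$ such that $g^\varepsilon > e$. Since $g^k = e$, also $(g^\varepsilon)^k = e$. On the other hand, $(g^\varepsilon)^k$ is a nonempty product of elements (namely $k$ copies of $g^\varepsilon$) each strictly greater than $e$, so by Remark \ref{ord_rmk1} we have $(g^\varepsilon)^k > e$. These two conclusions contradict each other, so no such $g$ can exist, and $G$ is torsion-free.

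The argument is essentially immediate from the two remarks; there is no real obstacle. The only thing to be slightly careful about is invoking Remark \ref{ord_rmk2} to flip to a positive power before applying Remark \ref{ord_rmk1}, since the latter applies only to products of strictly positive elements.
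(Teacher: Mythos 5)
Your proof is correct and uses exactly the same two remarks in the same way as the paper; the only cosmetic difference is that you phrase it as a proof by contradiction while the paper argues directly that $g^{n\varepsilon} > e$ for all $n \geq 1$.
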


Moreover, by combining Remarks \ref{ord_rmk1} and
\ref{ord_rmk2}, we obtain Lemma \ref{ord_lem},
which will be used in the~paper to construct high-density
sets of words
representing non-trivial elements.

\begin{lem}\label{ord_lem}
For every choice of non-trivial $g_1,\ldots,g_n \in G$
there exists a~sequence of signs
$\varepsilon_1,\ldots,\varepsilon_n\in \{-1, 1\}$ for which
every non-empty product (possibly with~repetitions) of
elements of~the~form $g_i^{\varepsilon_i}$ is non-trivial.
\end{lem}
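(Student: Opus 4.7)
The statement is essentially the concatenation of the two preceding remarks, so my plan is very short.

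My approach would be: for each $i \in \{1, \ldots, n\}$, apply Remark \ref{ord_rmk2} to the non-trivial element $g_i$ to obtain a sign $\varepsilon_i \in \{-1, 1\}$ such that $g_i^{\varepsilon_i} > e$. This defines the required sequence of signs. Now consider any non-empty product of elements from the list $g_1^{\varepsilon_1}, \ldots, g_n^{\varepsilon_n}$, allowing repetitions. Each factor is strictly greater than $e$ by construction, so Remark \ref{ord_rmk1} immediately yields that the product is strictly greater than $e$, and in particular non-trivial.

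The only thing to be mildly careful about is the formulation of what ``product'' means here: one should read the statement as saying that for any finite sequence $i_1, \ldots, i_k$ (with $k \geq 1$) of indices in $\{1, \ldots, n\}$, the element $g_{i_1}^{\varepsilon_{i_1}} \cdots g_{i_k}^{\varepsilon_{i_k}}$ is non-trivial. Under this reading every factor is $>e$, so the direct application of Remark \ref{ord_rmk1} suffices and no case analysis is needed.

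I do not expect any real obstacle; the lemma is a one-line corollary of the two remarks. The substantive content is not in the proof but in the choice of \emph{which} statement about left orders to record, since this is the form that will be combined later with a density/probabilistic argument to produce a large set of words representing non-trivial elements in a random group.
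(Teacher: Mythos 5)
Your proof is correct and is exactly the argument the paper gives: choose each $\varepsilon_i$ via Remark \ref{ord_rmk2} so that $g_i^{\varepsilon_i}>e$, then apply Remark \ref{ord_rmk1} to conclude any non-empty product of such elements is $>e$ and hence non-trivial.
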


\begin{proof} Choose $(\varepsilon_i)_{i=1}^n$ for which
$g_i^{\varepsilon_i}>e$ for $i=1,\ldots,n$. \end{proof}

Lemma \ref{ord_lem} is in fact equivalent to $G$ being
left-orderable,
but we will only need the implication we proved (cf.
\cite[Theorem 7.1.1]{rhemtulla}).

\section{Random groups with large number of generators}

We begin by reproducing terminology and useful observations
of \cite[Section~2]{przytycki11}.
By~$S$ we denote a finite set, called the \emph{alphabet}.
We define $S^{-1}$ to be the set of~formal inverses to~the~elements of~$S$, and
denote
$S^{\pm}=S\cup S^{-1}$. Elements of~$S^\pm$ are called the
\emph{letters}. By \emph{word over an alphabet $S$} we mean
a finite sequence of~\emph{letters}. We denote
$S=\{a_1,\ldots,a_n\}$, hence $n=|S|$.
$S$~is~to be interpreted as~the~set of generators of~$F_n$.

\begin{defin}[cf. {\cite[Definition 2.1]{przytycki11}}]
A \emph{basic automaton} (shortly
a~\emph{b-automaton}) \emph{over an alphabet $S$ with
transition data $\{\sigma_s\}$} is a pair
$(S,\{\sigma_s\})$,
where $\{\sigma_s\}_{s\in \{\emptyset\}\cup S^{\pm}}$ is a
family of subsets of~$S^\pm$.

The \emph{language} of a b-automaton with transition data
$\{\sigma_s\}$ is the set of~all non-empty words over $S$
beginning with a letter in $\sigma_\emptyset$
and such that for any two consecutive letters $ss'$ we have
$s'\in \sigma_s$.

We say that a b-automaton is \emph{$\lambda$-large}, for
some $\lambda \in (0,1)$, if
$\sigma_\emptyset \neq \emptyset$ and~for~each $s\in S^\pm$
we have $|\sigma_s|\geq \lambda 2n$.
\end{defin}

\begin{rem}[cf. {\cite[Remark
2.2(i)]{przytycki11}}]\label{bautomata_rmk1}
There are exactly $2^{2n(2n+1)}$ many b-automata over a
fixed alphabet $S$
of size $n$.
\end{rem}

\begin{rem}[cf. {\cite[Remark
2.2(ii)]{przytycki11}}]\label{bautomata_rmk2}
If a b-automaton is $\lambda$-large, then its~language
contains
at~least $\lceil \lambda 2n \rceil^{L-1}$ words of length
$L$ and at least $(\lceil \lambda2n \rceil -1)^{L-1}$
reduced words of length $L$.
\end{rem}

\begin{defin}[cf. {\cite[Definition 2.3]{przytycki11}}]

Let $I \subset \mathbb{N}_+$ be infinite and let
$\mathcal{L}$ be a set of reduced
words over an alphabet $S$, containing for all but finitely
many $L \in I$
at~least $ck^L$ words of length $L$, where $c>0, k>1$. Then
we~say that the \emph{$I$-growth rate of $\mathcal{L}$ is
at least $k$}. Similarly, if $k>k'$,
then we say that the \emph{$I$-growth rate of~$\mathcal{L}$
is greater than
$k'$}.

\end{defin}

It is convenient to extend the notion of density from
Definition \ref{random_group_def} in~the~following way.

\begin{defin}
Let $I \subset \mathbb{N}_+$ be infinite and let
$\mathcal{L}$ be a set
of reduced words over an alphabet $S$, containing for all
but finitely many $L \in I$
at least $c (2n-1)^{dL}$ words of~length $L$, where
$c>0, d\in (0,1)$.
Then we say that the~\emph{$I$-density of $\mathcal{L}$
is~at~least~$d$}.
\end{defin}

Notions of density $d$ and growth rate $k$ of the set
$\mathcal{L}$ are easily
seen to~be~strictly related by~$k=(2n-1)^d$, i.e. for such
$k,d$, with $d\in(0,1)$, the~set $\mathcal{L}$ has
$I$-growth rate at least $k$ if and only if it has
$I$-density at~least~$d$.

The following is a well known fact in random groups. We
reprove it in~a~stronger form in~Appendix
\ref{last_chapter}.

\begin{prop}[cf. {\cite[Section 9.A]{gromov93}}]
\label{dens_inters_prop}
Let $I\subset \mathbb{N}_+$ be infinite. Suppose
${d, d'\in (0,1)}$ are such that $d+d'>1$ and $R_f \subset
F_n$ is a fixed set of relators
in~some fixed number~$n$ of generators, of~$I$-density at
least $d'$.
Then \mbox{w.\ $I$-o.p.\ a} random set $R$ of~relators at
density $d$ intersects $R_f$.
\end{prop}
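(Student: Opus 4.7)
The plan is the standard first-moment / independence computation. First I would estimate the probability $p_L$ that a single uniformly chosen element of $R_L$ lies in $R_f$. Since the total number of reduced words of length $L$ equals $2n(2n-1)^{L-1}$ and, by the $I$-density hypothesis, for all but finitely many $L\in I$ we have $|R_f\cap R_L|\geq c(2n-1)^{d'L}$ for some $c>0$, I get
$$p_L \;\geq\; \frac{c\,(2n-1)^{d'L}}{2n(2n-1)^{L-1}} \;=\; \frac{c(2n-1)}{2n}\,(2n-1)^{-(1-d')L}.$$

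Next I would pass from a single relator to the full random set. The $M:=\lfloor (2n-1)^{dL}\rfloor$ entries of $R$ are drawn independently and uniformly from $R_L$, so the probability that $R\cap R_f=\emptyset$ is $(1-p_L)^M$, which by the elementary inequality $1-x\leq e^{-x}$ is at most $\exp(-p_L M)$. Using $M\geq (2n-1)^{dL}-1$, I obtain
$$p_L\,M \;\geq\; \frac{c(2n-1)}{2n}\Bigl((2n-1)^{(d+d'-1)L}-(2n-1)^{(d'-1)L}\Bigr).$$
Since $d+d'>1$ and $d'<1$, the right-hand side tends to $+\infty$ as $L\to\infty$, so $\exp(-p_L M)\to 0$ along $I$. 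Taking complements gives the claim with $I$-overwhelming probability.

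There is essentially no real obstacle here, only some light bookkeeping: the finitely many $L\in I$ for which the lower bound $|R_f\cap R_L|\geq c(2n-1)^{d'L}$ might fail are discarded (they do not affect a limit as $L\to\infty$), and the floor in the definition of $M$ is absorbed into the crude estimate $M\geq (2n-1)^{dL}-1$. The genuinely delicate work is reserved for Appendix~\ref{last_chapter}, where one wants the much stronger statement that $|R\cap R_f|$ itself is of density roughly $d+d'-1$; this needs a second-moment / variance argument to rule out anomalously small intersections, together with the assumption $d<\tfrac{1}{2}$ that lets one avoid repetitions among the chosen relators. For the present proposition, however, the one-line Chernoff-type bound above suffices.
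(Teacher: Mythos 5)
Your argument is correct, and it is genuinely different from the one in the paper. You use only independence and the elementary bound $(1-p_L)^{M}\leq e^{-p_L M}$ to show directly that $\P(R\cap R_f=\emptyset)\to 0$ along $I$; this is a clean first-moment--style estimate and it fully establishes Proposition~\ref{dens_inters_prop}. The paper instead derives Proposition~\ref{dens_inters_prop} from the more general Proposition~\ref{gen}, which is a two-sided concentration statement for the random variable $D_L=|R\cap R_f|$ (counted with multiplicity) proved via Chebyshev's inequality (Lemma~\ref{cheb}): with the notation $a_L=|R_f\cap R_L|$, $b_L=\lfloor(2n-1)^{dL}\rfloor$, $c_L=|R_L|$, one shows $\E D_L=\frac{a_L b_L}{c_L}$, $\Var D_L\leq \E D_L$, and concludes that $D_L$ is concentrated around its mean, hence eventually positive. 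What the paper's heavier route buys is the quantitative two-sided bound
\begin{equation}\nonumber
(1-\varepsilon)\tfrac{a_L b_L}{c_L}\leq D_L\leq(1+\varepsilon)\tfrac{a_L b_L}{c_L}
\end{equation}
which is then reused to prove the stronger Proposition~\ref{birth_harder} (that the intersection has density roughly $d+d'-1$ when $d<\tfrac{1}{2}$); your approach gives no upper bound on $D_L$ and so would not yield that refinement, but for the present proposition your argument is shorter and entirely sufficient. Your closing remark correctly identifies this division of labor.
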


From this we get

\begin{lem}[cf. {\cite[Lemma
2.4]{przytycki11}}]\label{growth_dens_inters_lem}
Let $I\subset \mathbb{N}_+$ be infinite and let
$\mathcal{L}$ be a~set of~reduced words over the alphabet
$S$, \mbox{of  $I$-growth} rate greater than
$(2n-1)^{1-d}$, for some $d\in(0,1)$. Then w.\,$I$-o.p.\ a
random set
of relators at~density $d$ intersects~$\mathcal{L}$.
\end{lem}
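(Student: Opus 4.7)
The plan is to translate the growth-rate hypothesis into a density hypothesis and then apply Proposition~\ref{dens_inters_prop} directly. The observation already recorded in the text is that for a set $\mathcal{L}$ of reduced words, ``$I$-growth rate at~least $k$'' with $k=(2n-1)^{d'}$ is the same condition as ``$I$-density at~least $d'$''. So the task reduces to picking an exponent $d'$ which makes both the density hypothesis and the hypothesis of Proposition~\ref{dens_inters_prop} satisfied.

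Concretely, I~would unfold the definition of ``$I$-growth rate greater than $(2n-1)^{1-d}$'': by definition there exists some $k > (2n-1)^{1-d}$ such that $\mathcal{L}$ has $I$-growth rate at~least $k$, i.e.\ for all but finitely many $L\in I$ the set $\mathcal{L}$ contains at~least $cL^{\text{-nothing}}\,k^L$ words of length $L$, for some $c>0$. Set $d' := \log_{2n-1}(k)$, so that $k=(2n-1)^{d'}$. The strict inequality $k>(2n-1)^{1-d}$ translates into $d' > 1-d$, hence $d+d' > 1$. With this choice, $\mathcal{L}$ has $I$-density at~least $d'$.

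Now I would invoke Proposition~\ref{dens_inters_prop} with the fixed set $R_f:=\mathcal{L}$ (which has $I$-density at~least $d'$) and a random set of relators $R$ at density $d$. Since $d+d'>1$, the proposition yields that w.\ $I$-o.p.\ $R\cap \mathcal{L}\neq\emptyset$, which is exactly the conclusion of the lemma.

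There is no real obstacle here; the only thing to be careful about is that the growth-rate hypothesis is \emph{strict} (``greater than'', not ``at~least''). That strictness is precisely what gives the slack $d'>1-d$, which in turn gives the strict inequality $d+d'>1$ required by Proposition~\ref{dens_inters_prop}. If the hypothesis were merely ``$I$-growth rate at~least $(2n-1)^{1-d}$'' we would only get $d+d'\geq 1$, which is insufficient. All other steps are a direct bookkeeping of the dictionary between growth rates and densities established earlier in the section.
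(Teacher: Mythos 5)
Your proof is correct and is essentially what the paper intends: the lemma is stated as an immediate consequence of Proposition~\ref{dens_inters_prop} via the growth-rate/density dictionary, and you spell out exactly that translation, including the correct observation that the strict inequality in the hypothesis is what produces $d+d'>1$. One pedantic caveat: Proposition~\ref{dens_inters_prop} and the density definition require $d'\in(0,1)$, and your choice $d'=\log_{2n-1}k$ could a priori equal $1$ (it cannot exceed $1$, since there are only $\sim(2n-1)^L$ reduced words of length $L$); this is harmless because growth rate at least $k$ implies growth rate at least any $k''\in(1,k)$, so one may always pick $d'$ strictly inside $(1-d,1)$.
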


We will be interested in the following consequence,
proven in~\cite{przytycki11}.

\begin{cor}[cf. {\cite[Corollary
2.5]{przytycki11}}]\label{cor_size}
For given $\lambda, d \in (0,1)$, if $n$ is sufficiently
large, then
w.o.p.\ a~random set of relators at density $d$ intersects
the languages of~all $\lambda$-large b-automata
over~the~alphabet $S$.
\end{cor}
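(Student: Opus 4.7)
The plan is to apply Lemma \ref{growth_dens_inters_lem} to each individual $\lambda$-large b-automaton over $S$ and then take a union bound, using the fact that by Remark \ref{bautomata_rmk1} the total number of such automata is a constant depending only on $n$.

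First I would fix a single $\lambda$-large b-automaton $A$ over $S$. By Remark \ref{bautomata_rmk2}, its language contains at~least $(\lceil \lambda 2n \rceil -1)^{L-1}$ reduced words of length~$L$, and hence has $\mathbb{N}_+$-growth rate at least $\lceil \lambda 2n \rceil -1$. The key comparison is that this bound grows linearly in $n$, whereas the threshold $(2n-1)^{1-d}$ appearing in Lemma \ref{growth_dens_inters_lem} grows only like $n^{1-d}$ with exponent $1-d<1$. Thus for all $n$ sufficiently large (depending on $\lambda$ and $d$ but crucially not on the choice of automaton) we have $\lceil \lambda 2n \rceil -1 > (2n-1)^{1-d}$, and Lemma \ref{growth_dens_inters_lem} yields that for each such $A$, a random set of relators at density $d$ fails to intersect the language of $A$ with probability tending to $0$ as $L\to\infty$.

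Finally, Remark \ref{bautomata_rmk1} tells us that there are exactly $2^{2n(2n+1)}$ b-automata over $S$, a number depending only on $n$. A union bound over this finite family then multiplies the individual failure probabilities by a constant (in~$L$), so the probability of failure at some $\lambda$-large b-automaton still tends to $0$ as $L\to\infty$, which is what is required.

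The argument is essentially bookkeeping and I do not expect a genuine obstacle; the only mildly delicate point is the growth-rate comparison, which however rests on the elementary observation that $1-d<1$, so the linear-in-$n$ lower bound $\lceil \lambda 2n \rceil -1$ eventually dominates the sublinear threshold $(2n-1)^{1-d}$ uniformly over all $\lambda$-large b-automata.
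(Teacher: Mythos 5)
Your proposal is correct and follows essentially the same route as the paper: fix a single $\lambda$-large b-automaton, use Remark~\ref{bautomata_rmk2} to get the growth-rate lower bound $\lceil\lambda 2n\rceil-1$, compare it to $(2n-1)^{1-d}$ via Lemma~\ref{growth_dens_inters_lem}, and then invoke Remark~\ref{bautomata_rmk1} to reduce to a finite union bound. The only cosmetic difference is that you spell out the union-bound step explicitly, whereas the paper phrases it as ``it suffices to show the conclusion holds for every single automaton''; the substance is identical.
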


For a group $G$ with presentation $G=\langle S|R \rangle$
and a word $w$ over the~alphabet $S$, we will denote by
$\overline{w}$ the corresponding element
of $G$.

To obtain Theorem \ref{target_thm} for $n$ sufficiently
large, we just need the following.

\begin{prop}\label{simple_non_ordered_prop}
Let $G$ be a group with presentation $G=\langle S|R
\rangle$ such that
$R$ intersects the~languages of all $\frac{1}{2}$-large
b-automata over $S$. Then $G$ has~no~non-trivial left-orderable quotients.
\end{prop}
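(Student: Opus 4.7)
The plan is to argue by contradiction. Suppose $G$ is non-trivial and left-ordered by some $\leq$. Non-triviality forces at least one generator to be non-trivial in $G$, so $T=\{i : \overline{a_i}\neq e\}$ is non-empty. I would apply Lemma~\ref{ord_lem} to the family $\{\overline{a_i}\}_{i\in T}$ to obtain signs $\{\varepsilon_i\}_{i\in T}\subset\{-1,1\}$ such that every non-empty product of elements $\overline{a_i}^{\varepsilon_i}$ ($i\in T$) is non-trivial in $G$, and extend the sign choice arbitrarily to $i\notin T$. The goal is then to produce a relator $w\in R$ of the form $w=a_{i_1}^{\varepsilon_{i_1}}\cdots a_{i_k}^{\varepsilon_{i_k}}$ with $i_1\in T$. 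Since the factors indexed by $i_j\notin T$ project to $e$, the product $\overline{w}$ would reduce to a non-empty product of elements $\overline{a_i}^{\varepsilon_i}$ with $i\in T$, which is non-trivial by Lemma~\ref{ord_lem}, contradicting $\overline{w}=e$.

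To produce such a $w$, I would invoke the hypothesis that $R$ meets the language of every $\frac{1}{2}$-large b-automaton over $S$. Specifically, define the b-automaton with $\sigma_\emptyset=\{a_i^{\varepsilon_i}:i\in T\}$ and $\sigma_s=\{a_1^{\varepsilon_1},\ldots,a_n^{\varepsilon_n}\}$ for each $s\in S^\pm$. Then $\sigma_\emptyset\neq\emptyset$ and $|\sigma_s|=n=\tfrac{1}{2}\cdot 2n$, so the b-automaton is $\frac{1}{2}$-large, and its language consists exactly of the words of the form described above. By assumption $R$ contains such a $w$, and the contradiction follows.

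The only subtle point is the asymmetry in the definition: $\sigma_\emptyset$ must be restricted to $T$-indexed letters so that the first factor of $\overline{w}$ is genuinely non-trivial, while $\sigma_s$ for $s\in S^\pm$ must be large enough to satisfy the $\frac{1}{2}$-largeness bound, forcing us to allow all $n$ letters $a_i^{\varepsilon_i}$ regardless of whether $i\in T$. If all generators happen to be non-trivial then $T=\{1,\ldots,n\}$ and $\sigma_\emptyset$ coincides with the $\sigma_s$; the case distinction exists only to accommodate groups in which some generators become trivial. Beyond this design decision, the argument is routine.
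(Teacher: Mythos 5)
Your proof is correct and follows essentially the same route as the paper. The only cosmetic difference is that you take $\sigma_\emptyset=\{a_i^{\varepsilon_i}:i\in T\}$, whereas the paper (via Lemma~\ref{positive_relator_lem}) fixes one index $i_1\in T$ and uses $\sigma_\emptyset=\{a_{i_1}^{\varepsilon_{i_1}}\}$; both choices produce a reduced relator whose letters all lie in $\{a_1^{\varepsilon_1},\ldots,a_n^{\varepsilon_n}\}$ and whose first letter is $T$-indexed, after which the ``delete trivial factors'' contradiction is identical.
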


In order to prove Proposition
\ref{simple_non_ordered_prop}, we use the following lemma,
which is~our main step towards exploiting the
hypothetical left-orderability.

\begin{lem}\label{positive_relator_lem}
Let $R$ be a set of words over $S$. Assume $R$
intersects the~languages of all~$\frac{1}{2}$-large
b-automata over $S$. Then for every choice of signs
$\varepsilon=(\varepsilon_1,\ldots,\varepsilon_n) \in
\{-1,1\}^n$ and~a~number $i\in\{1,\ldots,n\}$, there exists
a non-empty reduced word $w\in R$, consisting only
of~letters from the set
$\{a_1^{\varepsilon_1},a_2^{\varepsilon_2},\ldots,a_n^{\varepsilon_n}\}$,
with~at~least one
occurrence of $a_i^{\varepsilon_i}$. \end{lem}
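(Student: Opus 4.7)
The plan is to exhibit a single $\tfrac{1}{2}$-large b-automaton whose language consists entirely of non-empty reduced words over the sub-alphabet $T := \{a_1^{\varepsilon_1},\ldots,a_n^{\varepsilon_n}\}$, each beginning with $a_i^{\varepsilon_i}$. The conclusion then follows immediately from the hypothesis that $R$ meets the language of every $\tfrac{1}{2}$-large b-automaton.

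Concretely, I would define the transition data by $\sigma_\emptyset := \{a_i^{\varepsilon_i}\}$ and $\sigma_s := T$ for every $s \in S^\pm$. Since $|T| = n = \tfrac{1}{2}\cdot 2n$ and $\sigma_\emptyset \neq \emptyset$, the automaton is $\tfrac{1}{2}$-large. Unpacking the definition of its language, every accepted word is non-empty, starts with the letter $a_i^{\varepsilon_i}$, and has every subsequent letter in $T$; thus it uses only letters from $T$ and contains (at its first position) an occurrence of $a_i^{\varepsilon_i}$.

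The only remaining point is that every word over $T$ is automatically reduced. This is the small observation that does the work: two letters $a_j^{\varepsilon_j}, a_k^{\varepsilon_k} \in T$ can be mutual inverses only if $j=k$ and $\varepsilon_j = -\varepsilon_k$, which is impossible since $\varepsilon_j \in \{-1,1\}$. Hence no two consecutive letters in any accepted word cancel, so every element of the language is reduced. Applying the hypothesis to this automaton yields a word $w \in R$ with all the properties required by the lemma.

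I do not expect a genuine obstacle in this proof: the entire content is in choosing the transition data so that the language is simultaneously large enough to be $\tfrac{1}{2}$-large, restricted to the sub-alphabet $T$, and forced to contain $a_i^{\varepsilon_i}$ (which is most cleanly achieved by forcing it to appear as the first letter via $\sigma_\emptyset$). Once this choice is made, the verifications are immediate from the definitions.
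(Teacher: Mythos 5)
Your proposal is correct and takes exactly the same route as the paper: you construct the same $\tfrac{1}{2}$-large b-automaton (with $\sigma_\emptyset = \{a_i^{\varepsilon_i}\}$ and $\sigma_s = \{a_1^{\varepsilon_1},\ldots,a_n^{\varepsilon_n}\}$) and apply the hypothesis to its language. The only difference is that you spell out the (correct) small observation that words over $T$ are automatically reduced, which the paper states without elaboration.
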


\begin{proof}[Proof of Lemma \ref{positive_relator_lem}]
Consider a b-automaton $\mathbb{A}_{\varepsilon,i}$ over
$S$ with transition data
${\sigma_\emptyset=\{a_i^{\varepsilon_i}\}}$
and $\sigma_s =
\{a_1^{\varepsilon_1},\ldots,a_n^{\varepsilon_n}\}$
for every $s\in S^{\pm}$. Every word in its language
$\mathcal{L}_{\varepsilon, i}$ is reduced.
$\mathbb{A}_{\varepsilon, i}$ is $\frac{1}{2}$-large, hence
there exists some
$w \in \mathcal{L}_{\varepsilon, i} \cap R$. The word $w$
starts with $a_i^{\varepsilon_i}$ and~satisfies the
conditions we imposed.
\end{proof}

\begin{proof}[Proof of Proposition
\ref{simple_non_ordered_prop}]
Suppose there exists a~set of~relators $R'$,
containing $R$ and not necessarily finite,
such that $Q=\langle S| R' \rangle$
is left-orderable and~non-trivial. Let $a_{i_1},a_{i_2},\ldots,a_{i_m}$ be~all those $a_j\in S$, such that $\overline{a_j}\in Q$ is~non-trivial.
There must be at~least one, since $Q$ is generated by
the~elements of~the~form~$\overline{a_j}$.
By~Lemma~\ref{ord_lem}, we can find signs
${\varepsilon_{i_1},\ldots,\varepsilon_{i_m}\in\{-1,1\}}$,
such that every non-empty word consisting of~letters from
$\{a_{i_1}^{\varepsilon_{i_1}},\ldots,a_{i_m}^{\varepsilon_{
i_m}}\}$ represents a~non-trivial element of~$Q$. Note that those
words are always reduced.

Now for $j\in\{1,\ldots,n\}\setminus\{i_1,\ldots,i_m\}$
choose $\varepsilon_j\in\{-1,1\}$ in arbitrary way. We~have
thus defined a sequence $(\varepsilon_1,\ldots,\varepsilon_n) \in \{-1,1\}^n$.
By Lemma~\ref{positive_relator_lem} applied to this
sequence and~$i=i_1$, we~obtain
a word $w$ which lies in $R$, so~it~represents
the~trivial element of~$Q$, and~consists of~letters
of~the~form~$a_j^{\varepsilon_j}$ with~at~least one occurrence
of~$a_{i_1}^{\varepsilon_{i_1}}$. As $a_j^{\varepsilon_j}$
for $j \notin \{i_1,\ldots,i_m\}$ represent the~trivial
element, we can remove all occurrences of such letters from
$w$ and~obtain that way a word $w_1$, still representing
the~trivial element and~consisting only of letters
of~the~form~$a_{i_j}^{\varepsilon_{i_j}}$. $w_1$~is,~however,
non-empty because of~at least one occurrence of
$a_{i_1}^{\varepsilon_{i_1}}$. We arrive thus
at~a~contradiction with~the~earlier definition of signs
$\varepsilon_{i_1},\ldots,\varepsilon_{i_m}$.
\end{proof}

For fixed $d\in (0, 1)$ and $\lambda=\frac{1}{2}$ there is
$n_0$ such that the conclusion of~Corollary~\ref{cor_size}
holds for all $n\geq n_0$. For such $n$ Theorem
\ref{target_thm} is now almost immediate.

\begin{proof}[Proof of Theorem \ref{target_thm} for $n\geq
n_0$]
A random group $G$ at density $d$ is w.o.p.\ presented
by~$\langle S| R \rangle$,
where $R$ intersects the languages of all
$\frac{1}{2}$-large b-automata over~$S$, so, by
Proposition~\ref{simple_non_ordered_prop}, it has
no non-trivial left-orderable quotients.
\end{proof}

\section{Increasing the number of generators}

We now generalise our partial proof of Theorem
\ref{target_thm} to arbitrary
values of~$n\geq 2$. We follow closely the ideas of
{\cite[Section 3]{przytycki11}}.

We fix $n\geq 2$ and $d\in (0,1)$. We furthermore fix $B$ to
be a natural number that is~sufficiently large with respect
to $n$ and $d$
in a way we will specify later.

As before, we denote by $S$ the set of generators
$\{a_1,\ldots,a_n\}$.
Let $\widetilde{S}\subset F_n$ denote the~set of~reduced
words of length $B$ over the
alphabet $S$. The~involution on $\widetilde{S}$ mapping
each word to~its
inverse does not have fixed points. Thus we can partition
$\widetilde{S}$ into $\hat{S}$ and $\hat{S}^{-1}$.
We~introduce
the notation $\hat{S}^{\pm}=\hat{S}\cup\hat{S}^{-1}$ in
place of~$\widetilde{S}$.
Let $\hat{n}$ be the number $|\hat{S}|=n(2n-1)^{B-1}$.

Furthermore, for $0\leq P < B$ let $I_P \subset
\mathbb{N}_+$ denote the set of those $L$ that can be
written as~$L=B\hat{L}+P$ with $\hat{L}>0$.

\begin{defin}[{\cite[Definition 3.1]{przytycki11}}]
Let $r$ be a word of length $L\in I_0$ over
the~alphabet $S$. Divide
the word $r$ into $\hat{L}$ blocks of length $B$. This
determines a~new word $\hat{r}$ of length $\hat{L}$
over~the~alphabet $\hat{S}$, which we call
the~word \emph{associated} to $r$.
\end{defin}

\begin{defin}[{\cite[Definition
3.2]{przytycki11}}]\label{assoc}
Given a set $R$ of~reduced relators over $S$
of~equal length $L\in I_P$, we define
the \emph{associated group} $\hat{G}$ in the following way.

If $P=0$, then we consider the set $\hat{R}$ of relators
associated to relators in~$R$. We~define $\hat{G}$ to be
the group $\langle\hat{S}|\hat{R}\rangle$.

If $1\leq P < B$, then we do the following construction.
Suppose that ${r_1,r_2\in R}$ are two relators of length
$L$ over $S$, satisfying $r_1=q_1v^{-1}$ and
$r_2=vq_2$ (we assume $q_1,q_2,v$ to be reduced and that
there are no reductions
between $q_1$ and~$v^{-1}$ or~between $v$ and $q_2$), for
some word $v$ over $S$ of length $P$. We then obtain
a~(possibly non-reduced) word $q_1q_2$ over $S$, of~length
$2B\hat{L}$, with the property that $\overline{q_1q_2}=e$
in~$G=\langle S|R \rangle$. To this word we can associate,
as before, a relator over $\hat{S}$, of length $2\hat{L}$
(possibly
non-reduced), which we denote by~$\hat{r}(r_1,r_2)$. We
denote by $\hat{R}$ the set of~all $\hat{r}(r_1,r_2)$
as~above and we define $\hat{G}=\langle \hat{S} | \hat{R}
\rangle$.
\end{defin}

The main intuition here is that $\hat{R}$ obtained from a
random set $R$ of~relators
over $S$, at~density~$d$, at length $L\in I_0$ is very
similar
to a random set of relators over $\hat{S}$, at~the~same
density $d$,
at length $\frac{L}{B}$ (see
{\cite[Section~3]{przytycki11}}).

By increasing $B$, the number $\hat{n}$
can be made arbitrarily large. We can thus have $\hat{n}$
large
enough to obtain the conclusion of Corollary \ref{cor_size}
for intersections
of~languages of~$\frac{1}{2}$-large b-automata over
$\hat{S}$ with random sets of~relators at density $d$. We
then use the~following
analogue of Proposition~\ref{simple_non_ordered_prop}.

\begin{prop} \label{assoc_obs}
Suppose that $\hat{R}$, obtained as in Definition
$\ref{assoc}$ from $R$
being a set of reduced relators of the same length,
intersects languages of~all $\frac{1}{2}$-large b-automata
over $\hat{S}$. Then \mbox{$G=\langle S | R \rangle$}
has no non-trivial left-orderable quotients.
\end{prop}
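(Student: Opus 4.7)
The plan is to reduce to the argument of Proposition \ref{simple_non_ordered_prop}, but over the alphabet $\hat{S}$, exploiting the key observation that each word $\hat{w}\in\hat{R}$, when expanded back to a word in $S$ by replacing each letter $\hat{a}^{\pm 1}\in\hat{S}^{\pm}$ with the corresponding reduced word of length $B$ (or its inverse), represents the trivial element of $G=\langle S\mid R\rangle$. For $P=0$ this is immediate from Definition \ref{assoc}. For $P\geq 1$, given $r_1=q_1v^{-1}$ and $r_2=vq_2$ in $R$, the relations $\overline{r_1}=\overline{r_2}=e$ in $G$ force $\overline{q_1}=\overline{v}$ and $\overline{q_2}=\overline{v}^{-1}$, so the expansion $q_1q_2$ of $\hat{r}(r_1,r_2)$ satisfies $\overline{q_1q_2}=e$ in $G$.

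Now I assume for contradiction that $G$ is non-trivial and left-orderable. I first dispose of the degenerate case in which $\overline{\hat{a}}=e$ for every $\hat{a}\in\hat{S}$: this would make every reduced word of length $B$ over $S$ trivial in $G$, and comparing $a_1^B$ with $a_1^{B-1}a_j$ for $j=2,\ldots,n$ would then force $\overline{a_j}=\overline{a_1}^{-(B-1)}$ and $\overline{a_1}^B=e$, so that $G$ would be a non-trivial cyclic group of finite order, contradicting torsion-freeness from Corollary \ref{ord_cor}. Hence the set $\hat{S}_{*}=\{\hat{a}\in\hat{S}:\overline{\hat{a}}\neq e\}$ is non-empty.

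Applying Lemma \ref{ord_lem} in $G$ to the family $\{\overline{\hat{a}}:\hat{a}\in\hat{S}_{*}\}$, I pick signs $\hat{\varepsilon}_{\hat{a}}\in\{-1,1\}$ so that every non-empty product of elements $\overline{\hat{a}}^{\hat{\varepsilon}_{\hat{a}}}$, $\hat{a}\in\hat{S}_{*}$, is $>e$ in $G$; I then extend $\hat{\varepsilon}$ arbitrarily to $\hat{S}\setminus\hat{S}_{*}$ and fix some $\hat{a}_0\in\hat{S}_{*}$. By the standing hypothesis, Lemma \ref{positive_relator_lem} applies to $\hat{R}$ with the alphabet $\hat{S}$, the chosen signs, and the index of $\hat{a}_0$, yielding a non-empty reduced word $\hat{w}\in\hat{R}$ that uses only the letters $\hat{a}^{\hat{\varepsilon}_{\hat{a}}}$ and contains at least one occurrence of $\hat{a}_0^{\hat{\varepsilon}_{\hat{a}_0}}$.

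Finally, I delete from $\hat{w}$ the letters with $\hat{a}\in\hat{S}\setminus\hat{S}_{*}$; since those expand to trivial elements of $G$, the resulting word $\hat{w}_1$ represents the same element of $G$ as $\hat{w}$. By the initial observation, that element is $e$, yet $\hat{w}_1$ is a non-empty product of elements $\overline{\hat{a}}^{\hat{\varepsilon}_{\hat{a}}}$ with $\hat{a}\in\hat{S}_{*}$, each $>e$, and is therefore itself $>e$ by Remark \ref{ord_rmk1}---a contradiction. The one step requiring genuine care is the initial observation in the $P\geq 1$ case, where the cancellation $r_1r_2=q_1v^{-1}vq_2=q_1q_2$ takes place in $F_n$ but the triviality of $\overline{q_1q_2}$ relies on the relations imposed by $R$ in $G$; everything else is a transposition of the argument of Proposition \ref{simple_non_ordered_prop} to the alphabet $\hat{S}$.
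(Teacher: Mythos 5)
Your proof is correct and its core is the same as the paper's: observe that each relator in $\hat{R}$ expands over $S$ to a word representing $e$ in $G$, choose signs via Lemma~\ref{ord_lem} for those $\hat{a}\in\hat{S}$ with $\overline{\hat{a}}\neq e$, invoke Lemma~\ref{positive_relator_lem} over $\hat{S}$ to produce $\hat{w}\in\hat{R}$ using only the chosen signed letters and containing a non-trivial one, delete the trivially-represented letters, and contradict Remark~\ref{ord_rmk1}. The only real divergence is in bookkeeping: the paper packages the expansion observation as a natural epimorphism $\phi\colon\hat{G}\to H$, where $H\leq G$ is the subgroup generated by the reduced length-$B$ words over $S$, applies Lemma~\ref{ord_lem} in $H$, and establishes non-degeneracy (some $\overline{\hat{a}}\neq e$) by noting $H$ has finite index in $G$, so $H=\{e\}$ would make $G$ finite, non-trivial and hence not torsion-free; you instead apply Lemma~\ref{ord_lem} in $G$ directly (equivalent, since a left order restricts to subgroups) and replace the finite-index argument with an ad hoc comparison of $a_1^B$ and $a_1^{B-1}a_j$, also reducing to Corollary~\ref{ord_cor}. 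Both routes are valid; the paper's is slightly slicker and doubles as motivation for introducing $H$ and $\phi$, while yours stays entirely inside $G$ at the small cost of a more bespoke non-degeneracy step.
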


\begin{proof}
Suppose there exists a~set of~relators $R'$,
containing $R$,
such that $Q=\langle S| R' \rangle$
is left-orderable and non-trivial. The construction
of $\hat{G}$ was performed in such a way, that by expanding
elements of $\hat{S}$ into words over $S$ we get a~natural
epimorphism $\phi: \hat{G} \twoheadrightarrow H$, where $H$ is the~subgroup of $Q$ generated by~the~elements
corresponding to~the~reduced words of length $B$ over~$S$.

We note that $H\subset Q$ is of finite index, since every
element $g\in Q$
is~of~the~form $g=\overline{w}$ for~some reduced word $w$ over
$S$
and~we may write $w=uv$ with $u$ of length at most $B$ and
$v$~of length divisible by $B$. We have thus
$\overline{v}\in H$,
hence $g \in \overline{u}H$ and~the~index $[Q:H]$ is not
greater
than the~number of possible values of $\overline{u}$, which
is finite.

Moreover, $H$ is non-trivial,
because otherwise $Q$ would be finite and non-trivial, hence
not~torsion-free, contradicting left-orderability (by
Corollary~$\ref{ord_cor}$).

Denote elements of
$\hat{G}$, represented by single letters from
$\hat{S}$, by $b_1,\ldots,b_{\hat{n}}$. They generate
$\hat{G}$,
so $H$ is generated by $\phi(b_1),\ldots,\phi(b_{\hat{n}})$,
not all of them being trivial. Let
$\phi(b_{i_1}),\ldots,\phi(b_{i_m})$
be all non-trivial elements of~the~form~$\phi(b_j)$.
The subgroup $H$ is~left-orderable, so, by Lemma
\ref{ord_lem}, there exist signs $\varepsilon_{i_1},\ldots,
\varepsilon_{i_m}\in \{-1,1\}$,
such that every non-empty product of~elements of~the~form
$\phi(b_{i_j})^{\eps_{i_j}}$ is non-trivial.
In~arbitrary way we choose $\eps_i\in\{-1,1\}$ for $i\in
\{1,\ldots,\hat{n}\}
\setminus\{i_1,\ldots,i_m\}$.

Fix $i=i_1$. For this index $i$ and the set $\hat{R}$ of
words over $\hat{S}$ we apply Lemma~\ref{positive_relator_lem} to conclude that there exists a
product of elements of~the~form~$b_j^{\eps_j}$, with
at~least one occurrence of~$b_{i_1}^{\eps_{i_1}}$, which
evaluates
to the trivial element in~$\hat{G}=\langle \hat{S} |
\hat{R} \rangle$.

By evaluating $\phi$ on this product, we get a product of
elements of
form $\phi(b_j)^{\eps_j}$, with at~least one occurrence of
$\phi(b_{i_1})^{\eps
_{i_1}}$, which evaluates to the~trivial element in~$H$.
Finally, by~leaving the~non-trivial factors only, we get
a~non-empty product of~elements of~the~form
$\phi(b_{i_j})^{\eps_{i_j}}$, evaluating to~the~trivial
element, which is a contradiction with
the definition of signs $\varepsilon_{i_1},\ldots,
\varepsilon_{i_m}$.

\end{proof}

The last element of the proof of Theorem \ref{target_thm}
is the following.
\begin{lem}[{\cite[Section
3]{przytycki11}}]\label{random_assoc}
If $B$ is sufficiently large, then,
in Gromov density model with $n$ generators, a~random set
$R$ of relators at density $d$ has w.o.p\ the~property,
that the set $\hat{R}$, obtained from~$R$ as in Definition
$\ref{assoc}$,
intersects languages of~all $\frac{1}{2}$-large b-automata
over $\hat{S}$.
\end{lem}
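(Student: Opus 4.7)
The plan is to fix a $\frac{1}{2}$-large b-automaton $\mathbb{A}$ over $\hat{S}$ and, for each residue $P\in\{0,\ldots,B-1\}$, show that w.\ $I_P$-o.p.\ the set $\hat{R}$ meets the language $\hat{\mathcal{L}}:=\mathcal{L}(\mathbb{A})$. A union bound over the finitely many $P$ and over the finitely many b-automata (by Remark~\ref{bautomata_rmk1}) then yields the result w.o.p.\ over $\mathbb{N}_+$. In each residue class, the strategy is to pull $\hat{\mathcal{L}}$ back to a high-$S$-density set and apply Proposition~\ref{dens_inters_prop}.

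For $P=0$, I would set
\[
\mathcal{L}_\mathbb{A} = \{\,r \in F_n : |r| = L \in I_0,\ r \text{ reduced over } S,\ \hat r \in \hat{\mathcal L}\,\}.
\]
Consecutive length-$B$ blocks of a reduced $S$-word are never mutually inverse, so $r\mapsto\hat{r}$ maps reduced $S$-words to reduced $\hat{S}$-words. To count $\mathcal{L}_\mathbb{A}$, I would combine Remark~\ref{bautomata_rmk2}, which furnishes at least $(\hat{n}-1)^{\hat{L}-1}$ reduced $\hat{S}$-words in $\hat{\mathcal L}$ of length $\hat{L}$, with the block-boundary estimate that the fraction of such words whose $B$-block concatenation remains reduced over $S$ is bounded below by $(\tfrac{2n-1}{2n})^{\hat L - 1}$. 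The resulting lower bound on $|\mathcal{L}_\mathbb{A} \cap R_L|$ translates into an $I_0$-density for $\mathcal{L}_\mathbb{A}$ over $S$ converging to $\tfrac{B-1}{B}+O(1/B)$ as $L\to\infty$ (using $\hat n = n(2n-1)^{B-1}$), hence exceeding $1-d$ for $B$ sufficiently large. Proposition~\ref{dens_inters_prop} then closes the case.

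For $1\le P<B$, $\hat R$ is built from pairs $(r_1,r_2)\in R\times R$ admitting a length-$P$ overlap $v$, via $\hat r(r_1,r_2)$. Following \cite[Section~3]{przytycki11}, for each length-$P$ word $v$ over $S$ I would introduce
\[
R_v^{\mathrm{end}} = \{r\in R : r \text{ ends in } v^{-1}\},\qquad R_v^{\mathrm{start}} = \{r\in R : r \text{ begins with } v\},
\]
and show via standard concentration that w.\,$I_P$-o.p.\ each has size at least $(2n-1)^{dL-P-o(L)}$ uniformly in $v$. Then, emulating the $P=0$ analysis, for each fixed $r_1=q_1v^{-1}\in R_v^{\mathrm{end}}$ the set of $q_2$'s making $\hat r(r_1, vq_2)\in\hat{\mathcal L}$ has $S$-density close to $1$ for $B$ large, and Proposition~\ref{dens_inters_prop} applied to the residual randomness in $R_v^{\mathrm{start}}$ produces the desired pair.

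The main obstacle I expect is the bookkeeping for $P\neq 0$: one must check that using the same random $R$ to supply both $R_v^{\mathrm{end}}$ and $R_v^{\mathrm{start}}$ does not destroy the independence needed for the density-intersection argument, that the occasional non-reducedness across the $q_1q_2$-join and within $\hat r(r_1,r_2)$ only degrades counts by lower-order factors, and that distinct pairs rarely collide onto a single associated word. These technical points are handled in \cite[Section~3]{przytycki11}, and the only substantive modification here is to replace Przytycki's target language by the arbitrary $\hat{\mathcal L}$, which affects only constants in the density estimate.
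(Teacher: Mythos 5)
Your overall strategy---decompose by residue class $P$, pull $\hat{\mathcal L}$ back to $S$-words for $P=0$, use a prefix/suffix overlap argument for $1\le P<B$, then union-bound over the finitely many $P$ and b-automata---matches the paper's. But both key technical steps, as you state them, have gaps. For $P=0$, the ``block-boundary fraction'' $\left(\tfrac{2n-1}{2n}\right)^{\hat L-1}$ is not justified: the proportion of words in $\hat{\mathcal L}$ whose $B$-block concatenation stays reduced over $S$ is not a per-step constant, since it depends on how each transition set $\sigma_{\hat s}$ happens to overlap the set $\rho_{\hat s}$ of boundary-cancelling successors, and in any case one cannot lower-bound a sub-count by multiplying a lower bound on the ambient count by a ratio. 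The paper's Lemma~\ref{reduced_automaton} sidesteps this entirely: delete the $\tfrac{1}{2n}2\hat n$ bad successors from each $\sigma_{\hat s}$ to form an explicit sub-automaton $\mathbb{A}^{\mathrm{red}}$, which is $\bigl(\lambda-\tfrac{1}{2n}\bigr)$-large and whose \emph{entire} language has reduced $S$-concatenation, giving directly at least $\bigl\lceil(\lambda-\tfrac{1}{2n})2\hat n\bigr\rceil^{\hat L-1}$ words and hence $I_0$-growth rate at least $\sqrt[B]{\lambda-\tfrac{1}{2n}}\,(2n-1)$. This is also where the hypothesis $\lambda>\tfrac{1}{2n}$ (automatic for $\lambda=\tfrac12$, $n\ge 2$, with $\lambda'\ge\tfrac14$) enters, so the claim that only ``constants in the density estimate'' change is too casual.

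For $1\le P<B$, the independence issue you flag is not a bookkeeping detail to be outsourced---as you set it up, the set of admissible $q_2$'s depends on a randomly chosen $r_1\in R$, so the target set fed to Proposition~\ref{dens_inters_prop} is itself random and the hypothesis of a \emph{fixed} set $R_f$ fails. The paper avoids this entirely: it defines deterministic prefix/suffix sets $\mathcal{P}_\mathbb{A}^P$ and $\mathcal{S}_\mathbb{A}^{\hat s,v}$, the latter indexed by the finitely many pairs $(\hat s,v)\in\hat{S}^\pm\times S^P$ rather than by relators in $R$. Corollaries~\ref{prefix_set} and~\ref{suffix_set} (both derived from Lemma~\ref{reduced_automaton}) show each has $I_P$-growth rate at least $\sqrt[B]{\lambda'}(2n-1)$; a union bound over this fixed finite family then gives that w.~$I_P$-o.p.\ $R$ meets $\mathcal{P}_\mathbb{A}^P$ \emph{and every} $\mathcal{S}_\mathbb{A}^{\hat s,v}$ simultaneously. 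On that event one deterministically picks $r_1\in R\cap\mathcal{P}_\mathbb{A}^P$, reads off its length-$P$ suffix $v^{-1}$ and the block letter $\hat s$ preceding it, and is then guaranteed some $r_2\in R\cap\mathcal{S}_\mathbb{A}^{\hat s,v}$; the word $\hat r(r_1,r_2)=\hat q_1\hat q_2$ lands in the language of $\mathbb{A}$ by construction of $\mathbb{A}^{\hat s,s}$. No conditioning on $R$, and no ``residual randomness,'' is ever needed.
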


Assuming Lemma \ref{random_assoc}, the proof of Theorem
\ref{target_thm} is straightforward.

\begin{proof}[Proof of Theorem \ref{target_thm}]
Let $B$ be sufficiently large for the conclusion of 
Lemma \ref{random_assoc} to hold.
Then, by the combination of Proposition \ref{assoc_obs} and
Lemma \ref{random_assoc},
a~random group $G=\langle S |R \rangle$ in~Gromov density
model has w.o.p.\ no non-trivial left-orderable quotients.
\end{proof}

The proof of Lemma \ref{random_assoc} (in slightly stronger
form) is given in
{\cite[Section~3]{przytycki11}} in~the~first 5~lines of the
proof of {\cite[Theorem 1.5]{przytycki11}}. The hypothesis
of~{\cite[Proposition 2.6]{przytycki11}} for group
$\hat{G}=\langle \hat{S} | \hat{R} \rangle$,
obtained from a random group $G = \langle S | R \rangle$,
is checked there, which amounts to proving that $\hat{R}$,
obtained from a random set $R$ of relators in Gromov model,
intersects languages of all $\frac{1}{3}$-large b-automata
over $\hat{S}$. It remains
to note that all $\frac{1}{2}$-large b-automata are, in
particular, $\frac{1}{3}$-large.

\appendix

\section{Intersections of high-density
sets}\label{last_chapter}

From now on, by $I \subset\mathbb{N}_+$ we denote a fixed
infinite subset and all limits with $L\rightarrow \infty$
are taken over $L \in I$. The main result of
this appendix is the~following.

\begin{prop}\label{gen}
Suppose that for each $L\in I$ we have a set $R_L$ of size
$c_L>0$ with $a_L>0$ elements
distinguished. For fixed $L$ we pick uniformly and
independently at random entries of~a~$b_L$-tuple ($b_L>0$)
from $R_L$ and~obtain
this way a~random variable $D_L$ equal to the number
of~the~entries of~the~resulting tuple being distinguished.
Assume that
$\frac{a_Lb_L}{c_L} \rightarrow \infty$ as~$L \rightarrow
\infty$. Then for every $\varepsilon >0$ the~following
holds
\begin{equation*}
\lim\limits_{L
\rightarrow\infty}\mathbb{P}\left((1-\varepsilon)\frac{a_Lb_
L}{c_L} \leq
D_L \leq (1+\varepsilon)\frac{a_Lb_L}{c_L}\right)=1.
\end{equation*}
\end{prop}

Before proving Proposition \ref{gen}, let us use it to give
a proof of Proposition~\ref{dens_inters_prop} and
its~generalisation.

\begin{proof}[Proof of Proposition
\ref{dens_inters_prop}]
Let $c_L$, for $L\in I$, denote the number of all reduced
relators of~length $L$
over $S$, i.e. $c_L=|R_L|=2n(2n-1)^{L-1}$. Moreover, let
$a_L = |R_f \cap R_L|$ be~the~number of relators of length
$L$ we distinguish by~wanting them
to be selected in the random tuple. Let $b_L = \lfloor
(2n-1)^{dL} \rfloor$.
We assume $a_L \geq C (2n-1)^{d'L} $ for $L\in I$
sufficiently large and some $C>0$. At length $L$, $R$~is a
tuple
of $b_L$ elements, chosen uniformly and independently
at~random from $R_L$.
Let $D_L$ be as in Proposition $\ref{gen}$.
Note that $\frac{a_Lb_L}{c_L} \rightarrow \infty$ as $L
\rightarrow \infty$,
since $d+d' >1$. We~may thus apply Proposition \ref{gen}
for any $\varepsilon >0$
to see that a random set $R$ of relators at density $d$, at
length $L$ has w.\ $I$-o.p.\ at least $D_L \geq
(1-\varepsilon)\frac{a_Lb_L}{c_L}\geq K (2n-1)^{(d+d'-1)L}$
entries from $R_f$, for some $K>0$. For~$L$~sufficiently
large it clearly implies that $R$ and~$R_f$ intersect.
\end{proof}

If we moreover assume that $d<\frac{1}{2}$ and $R_f$ is
roughly (not just at least) of~density $d'$,
then we can prove that the intersection is roughly of
density $d+d'-1$.

\begin{prop}\label{birth_harder}
Suppose $d,d'\in(0,1)$ are such that $d+d'>1$ and
$d<\frac{1}{2}$. Let $R_f \subset F_n$
be~a~fixed set of relators in some fixed numer $n$ of
generators, such that
for some $C_1, C_2 >0$ the inequalities
\begin{equation}\nonumber
C_1(2n-1)^{d'L} \leq |R_f \cap R_L| \leq C_2(2n-1)^{d'L}
\end{equation}
hold for all sufficiently large $L\in I$.

Then for some $K_1,K_2>0$ a random set $R$ of relators at
density $d$, at~length~$L$ satisfies w.~$I$-o.p.\ the
inequalities
\begin{equation}\nonumber
K_1(2n-1)^{(d+d'-1)L} \leq |R_f \cap R| \leq
K_2(2n-1)^{(d+d'-1)L},
\end{equation}
where $|R_f \cap R|$ denotes the number of distinct entries
of $R$, belonging
to $R_f$.
\end{prop}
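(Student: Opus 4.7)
The plan is to combine Proposition~\ref{gen} with a birthday-type argument that exploits the hypothesis $d<\frac{1}{2}$. Proposition~\ref{gen} will give the desired bounds on the number of entries of $R$ lying in~$R_f$ \emph{counted with multiplicity}, and the condition $d<\frac{1}{2}$ will then ensure that w.\ $I$-o.p.\ no entry of $R$ is repeated, so that this count agrees with the number $|R_f\cap R|$ of \emph{distinct} entries of $R$ belonging to $R_f$.

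First, as in the proof of Proposition~\ref{dens_inters_prop}, I would set $c_L=|R_L|=2n(2n-1)^{L-1}$, $b_L=\lfloor(2n-1)^{dL}\rfloor$ and $a_L=|R_f\cap R_L|$, so that by hypothesis $C_1(2n-1)^{d'L}\leq a_L\leq C_2(2n-1)^{d'L}$ for $L\in I$ sufficiently large. A direct calculation yields
\[\frac{C_1(2n-1)}{4n}(2n-1)^{(d+d'-1)L} \;\leq\; \frac{a_Lb_L}{c_L} \;\leq\; \frac{C_2(2n-1)}{2n}(2n-1)^{(d+d'-1)L}\]
for such $L$. Since $d+d'>1$, the middle quantity tends to infinity, so Proposition~\ref{gen} applies; choosing $\varepsilon=\tfrac{1}{2}$ produces constants $K_1',K_2'>0$ for which, w.\ $I$-o.p., the random variable $D_L$ counting entries of $R$ belonging to $R_f$ (with multiplicity) satisfies the inequalities $K_1'(2n-1)^{(d+d'-1)L}\leq D_L\leq K_2'(2n-1)^{(d+d'-1)L}$.

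Next I would show that w.\ $I$-o.p.\ no two entries of $R$ coincide. For any two distinct positions $i<j$ among the $b_L$ entries of the tuple, the $i$-th and $j$-th entries are independent uniform picks from $R_L$, hence equal with probability $1/c_L$; linearity of expectation bounds the expected number of colliding pairs by $\binom{b_L}{2}/c_L \leq b_L^2/(2c_L) = O((2n-1)^{(2d-1)L})$, which tends to $0$ because $2d-1<0$. Markov's inequality then yields no collisions w.\ $I$-o.p., and a union bound with the $D_L$-estimate finishes the proof: when $R$ has no repeated entries, $|R_f\cap R|=D_L$, so the bounds on $D_L$ transfer with $K_1=K_1'$ and $K_2=K_2'$. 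I do not expect a genuine obstacle here; the only point worth highlighting is that the hypothesis $d<\frac{1}{2}$ enters precisely through the collision estimate, which is why the author restricts to this regime.
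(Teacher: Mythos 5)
Your proof is correct and follows essentially the same strategy as the paper's: apply Proposition~\ref{gen} to bound the multiplicity count $D_L$, then use $d<\frac{1}{2}$ to show that w.\ $I$-o.p.\ the tuple has no repeated entries, so $|R_f\cap R|=D_L$. The only (minor) difference is in the no-collision step: you bound the expected number of colliding pairs by $\binom{b_L}{2}/c_L$ and apply Markov's inequality, whereas the paper directly lower-bounds the probability of distinctness via the product $\prod_{i<b_L}(1-i/c_L)$ and Bernoulli's inequality; both are standard and equivalent in strength here.
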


\begin{proof}
We use the notation from the proof of Proposition
\ref{dens_inters_prop}. Analogously to that proof, for some
$K_1,K_2>0$ we obtain
\begin{equation}\label{KK}
K_1(2n-1)^{(d+d'-1)L} \leq D_L \leq K_2(2n-1)^{(d+d'-1)L},
\end{equation} occurring w.\ $I$-o.p.\

Since $d<\frac{1}{2}$, we have
$\frac{b_L^2}{c_L}\rightarrow 0$ as $L\rightarrow \infty$.

Let us estimate the probability $q_L$ that in the
experiment defining $D_L$ all~elements of~the~obtained
$b_L$-tuple are pairwise distinct. It is the~same
as~the~probability that every element
of~the~tuple is different from the elements having smaller
indices (we assume
some fixed order on a tuple), so

\begin{equation}\nonumber
q_L=1\left(1-\frac{1}{c_L}\right)\left(1-\frac{2}{c_L}\right
)\ldots\left(1-\frac{b_L-1}{c_L}\right)
\geq \left(1-\frac{b_L-1}{c_L}\right)^{b_L}.
\end{equation}

For $L\in I$ sufficiently large we have $\frac{b_L^2}{c_L}
<1$, so $b_L\leq b_L^2 < c_L$
and the~number $x_L = -\frac{b_L-1}{c_L}$ sastifies $x_L
\geq -1$.
It means that we can use Bernoulli's inequality
to~obtain

\begin{equation}\nonumber
q_L \geq
\left(1+\left(-\frac{b_L-1}{c_L}\right)\right)^{b_L} \geq
1-b_L\frac{b_L-1}{c_L}.
\end{equation}

Obviously, $b_L\frac{b_L-1}{c_L} \rightarrow 0$ as
$L\rightarrow \infty$, because
$\frac{b_L^2}{c_L} \rightarrow 0$ as $L\rightarrow \infty$.
It~follows that
$q_L \rightarrow 1$ as~$L\rightarrow \infty$, so w.\
$I$-o.p.\ the number $D_L$ is the
number of~distinct entries of~$R$ belonging to $R_f$, which
combined with $(\ref{KK})$
concludes the~proof.
\end{proof}

For the proof of Proposition \ref{gen} we will apply the
following bound known as~the~Chebyshev's inequality.

\begin{lem}[{\cite[Lemma 3.1]{kallenberg}}]\label{cheb}
If $\xi$ is a random variable with $\mathbb{E}\xi^2<\infty$,
then for~every $\alpha >0$

\begin{equation}\nonumber
\mathbb{P}\left( |\xi - \mathbb{E}\xi| \geq \alpha \right)
\leq \frac{\Var \xi}{\alpha^2}.
\end{equation}
\end{lem}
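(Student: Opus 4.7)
The plan is to derive Chebyshev's inequality from Markov's inequality applied to the squared deviation $(\xi - \mathbb{E}\xi)^2$, which is the standard route. First, I would observe that $\eta := (\xi - \mathbb{E}\xi)^2$ is a non-negative random variable whose expectation is, by definition, $\Var \xi$, and that by the hypothesis $\mathbb{E}\xi^2 < \infty$ both $\mathbb{E}\xi$ and $\Var\xi$ are finite.

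Next, I would note the pointwise inequality
\begin{equation}\nonumber
\alpha^2 \cdot \mathbbm{1}_{\{|\xi - \mathbb{E}\xi|\geq \alpha\}} \leq (\xi-\mathbb{E}\xi)^2,
\end{equation}
which holds because on the event $\{|\xi - \mathbb{E}\xi| \geq \alpha\}$ the right side is at least $\alpha^2$, while off this event the left side vanishes. Taking expectations of both sides and dividing by $\alpha^2 > 0$ then gives
\begin{equation}\nonumber
\mathbb{P}\bigl(|\xi - \mathbb{E}\xi| \geq \alpha\bigr) \leq \frac{\mathbb{E}(\xi-\mathbb{E}\xi)^2}{\alpha^2} = \frac{\Var \xi}{\alpha^2},
\end{equation}
which is the desired inequality.

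There is no genuine obstacle here; the only points requiring care are checking that $\Var\xi$ is well-defined under the hypothesis (it is, since $\mathbb{E}\xi^2 < \infty$ implies $\mathbb{E}|\xi| < \infty$ by the Cauchy--Schwarz inequality, so $\mathbb{E}\xi$ exists and is finite, and then $\mathbb{E}(\xi - \mathbb{E}\xi)^2 = \mathbb{E}\xi^2 - (\mathbb{E}\xi)^2 < \infty$), and observing that the pointwise bound above is valid almost surely with respect to the underlying probability measure. Since the statement is a classical textbook fact, I would expect the proof to be a short two-line argument once the indicator trick is written down.
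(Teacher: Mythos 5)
Your proof is correct and is the standard argument via the indicator bound (Markov applied to the squared deviation). The paper does not prove this lemma at all — it is cited directly from Kallenberg without proof — so there is nothing to compare against beyond noting that your write-up is the textbook derivation and is sound.
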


\begin{proof}[Proof of Proposition \ref{gen}]

Fix $L\in I$. For $i=1,\ldots,b_L$ denote by $X_i^{(L)}$
the~random variable equal to 1, if $i$-th
element of~the considered random tuple is distinguished,
and equal to 0, otherwise.\\
Variables $(X_i^{(L)})_{i}$ are independent and
$\P(X_i^{(L)}=1)=\frac{a_L}{c_L}=1-\P(X_i^{(L)}=0)$,
so~$\E X_i^{(L)}=\frac{a_L}{c_L}$.
Next we check that $\Var
X_i^{(L)}=\frac{a_L}{c_L}(1-\frac{a_L}{c_L})$.
Since $D_L = \sum\limits_{i=1}^{b_L}X_i^{(L)}$, we have $\E
D_L = \frac{a_Lb_L}{c_L}$
and~$\Var D_L = \frac{a_Lb_L}{c_L}(1-\frac{a_L}{c_L})$.
Fix $\varepsilon >0$. Now we apply Lemma~\ref{cheb} for
$\xi = D_L$ and $\alpha = \varepsilon \mathbb{E}D_L$,
obtaining

\begin{equation}\nonumber
\begin{split}
\mathbb{P}\left( \biggl| D_L-\frac{a_Lb_L}{c_L} \biggr|
\geq \varepsilon \frac{a_Lb_L}{c_L}
\right)
&=
\mathbb{P}\left( \bigr| D_L-\mathbb{E}D_L \bigr|
\geq \varepsilon\mathbb{E}D_L
\right)\\
&\leq \frac{\Var D_L}{(\varepsilon\mathbb{E}D_L)^2}
=\frac{1-\frac{a_L}{c_L}}{\varepsilon^2\frac{a_Lb_L}{c_L}}
\leq \frac{1}{\varepsilon^2 \frac{a_Lb_L}{c_L}}
\rightarrow 0
\end{split}
\end{equation}

as $L\rightarrow \infty$, since we assumed that
$\frac{a_Lb_L}{c_L} \rightarrow \infty$.

\end{proof}

\subsection*{Acknowledgements}

Yago Antolin-Pichel, in private communication, suggested the
use of~Lemma \ref{ord_lem}, which is the key
property of the left-ordered groups used in the proof.
Piotr Przytycki suggested the~topic of this paper,
encouraged and advised the~author.
I~would like to thank both of them.


\begin{thebibliography}{10}




\normalsize
\baselineskip=17pt


\bibitem{rhemtulla}
R. Botto Mura, A. Rhemtulla,
\emph{Orderable groups}.
Lecture Notes in Pure and Applied Mathematics,
Vol. {27},
Marcel Dekker, New York-Basel,
1977.

\bibitem{przytycki11}
F. Dahmani, V. Guirardel, P. Przytycki,
\emph{Random groups do not split}.
Mathematische Annalen {349}(3) (2011),
657-673.

\bibitem{deroin14}
B. Deroin, A. Navas, C. Rivas,
\emph{Groups, orders, and dynamics}.
arXiv:1408.5805, 2014.

\bibitem{furman11}
A. Furman,
\emph{A survey of measured group theory}.
in:~Geometry, Rigidity, and~Group Actions.
Chicago Lectures in Mathematics Series,
The~University of~Chicago Press,
Chicago, 2011, 296-374.

\bibitem{gromov93}
M. Gromov,
\emph{Asymptotic invariants of infinite groups}.
in:~Geometric Group Theory, vol. {2} (Sussex, 1991).
London Mathematical Society Lecture Note Series, vol. {182},
Cambridge University Press, Cambridge, 1993, 1-295.

\bibitem{kallenberg}
O. Kallenberg,
\emph{Foundations of Modern Probability},
Springer Series in~Statistics. Probability
and its applications, Springer-Verlag New~York,
New~York, 1997.

\bibitem{ollivier04}
Y. Ollivier,
\emph{Sharp phase transition theorems for hyperbolicity of random groups}.
Geom. Funct. Anal. {14}(3) (2004), 595-679.

\bibitem{ollivier05}
Y. Ollivier,
\emph{A January 2005 invitation to random groups}.
in:~Ensaios Matem\'{a}ticos (Mathematical Surveys),
vol. {10}. Sociedade Brasileira de~Matem\'{a}tica,
Rio de Janeiro, 2005.

\end{thebibliography}
\end{document}